\begin{document}
\newcommand{\eqdef}{\stackrel{{\mathrm{def}}}{=}}
\newcommand{\coreplus}{C_{0+}^{\infty}(\Omega)}
\newcommand{\loc}{{\mathrm{loc}}}
\newcommand{\dx}{\,\mathrm{d}x}
\newcommand{\dy}{\,\mathrm{d}y}
\newcommand{\dz}{\,\mathrm{d}z}
\newcommand{\dt}{\,\mathrm{d}t}
\newcommand{\du}{\,\mathrm{d}u}
\newcommand{\dv}{\,\mathrm{d}v}
\newcommand{\dV}{\,\mathrm{d}V}
\newcommand{\ds}{\,\mathrm{d}s}
\newcommand{\dr}{\,\mathrm{d}r}
\newcommand{\dS}{\,\mathrm{d}S}
\newcommand{\drho}{\,\mathrm{d}\rho}
\newcommand{\core}{C_0^{\infty}(\Omega)}
\newcommand{\sob}{W^{1,p}(\Omega)}
\newcommand{\sobloc}{W^{1,p}_{\mathrm{loc}}(\Omega)}
\newcommand{\merhav}{{\mathcal D}^{1,p}}
\newcommand{\be}{\begin{equation}}
\newcommand{\ee}{\end{equation}}
\newcommand{\mysection}[1]{\section{#1}\setcounter{equation}{0}}
\newcommand{\bea}{\begin{eqnarray}}
\newcommand{\eea}{\end{eqnarray}}
\newcommand{\bean}{\begin{eqnarray*}}
\newcommand{\eean}{\end{eqnarray*}}
\newcommand{\thkl}{\rule[-.5mm]{.3mm}{3mm}}
\newcommand{\cw}{\stackrel{\rightharpoonup}{\rightharpoonup}}
\newcommand{\id}{\operatorname{id}}
\newcommand{\supp}{\operatorname{supp}}
\newcommand{\wlim}{\mbox{ w-lim }}
\newcommand{\mymu}{{x_N^{-p_*}}}
\newcommand{\R}{{\mathbb R}}
\newcommand{\N}{{\mathbb N}}
\newcommand{\Z}{{\mathbb Z}}
\newcommand{\Q}{{\mathbb Q}}
\newcommand{\abs}[1]{\lvert#1\rvert}
\newtheorem{theorem}{Theorem}[section]
\newtheorem{corollary}[theorem]{Corollary}
\newtheorem{lemma}[theorem]{Lemma}
\newtheorem{definition}[theorem]{Definition}
\newtheorem{remark}[theorem]{Remark}
\newtheorem{remarks}{Remarks}[section]
\newtheorem{proposition}[theorem]{Proposition}
\newtheorem{problem}[theorem]{Problem}
\newtheorem{conjecture}[theorem]{Conjecture}
\newtheorem{question}[theorem]{Question}
\newtheorem{example}[theorem]{Example}
\newtheorem{Thm}[theorem]{Theorem}
\newtheorem{Lem}[theorem]{Lemma}
\newtheorem{Pro}[theorem]{Proposition}
\newtheorem{Def}[theorem]{Definition}
\newtheorem{Exa}[theorem]{Example}
\newtheorem{Exs}[theorem]{Examples}
\newtheorem{Rems}[theorem]{Remarks}
\newtheorem{Rem}[theorem]{Remark}
\newtheorem{Cor}[theorem]{Corollary}
\newtheorem{Conj}[theorem]{Conjecture}
\newtheorem{Prob}[theorem]{Problem}
\newtheorem{Ques}[theorem]{Question}
\newcommand{\pf}{\noindent \mbox{{\bf Proof}: }}
\renewcommand{\theequation}{\thesection.\arabic{equation}}
\catcode`@=11 \@addtoreset{equation}{section} \catcode`@=12
\title{On the Hardy-Sobolev-Maz'ya inequality\\ and its
generalizations}
\author{Yehuda Pinchover\\
 {\small Department of Mathematics}\\ {\small  Technion - Israel Institute of Technology}\\
 {\small Haifa 32000, Israel}\\
{\small pincho@techunix.technion.ac.il}\\\and Kyril Tintarev
\\{\small Department of Mathematics}\\{\small Uppsala University}\\
{\small SE-751 06 Uppsala, Sweden}\\{\small
kyril.tintarev@math.uu.se}}
 \maketitle
\begin{abstract}
 The paper deals
with natural generalizations of the Hardy-Sobolev-Maz'ya
inequality and some related questions, such as the optimality and
stability of such inequalities, the existence of minimizers of the
associated variational problem, and the natural energy space
associated with the given functional.
\end{abstract}

\noindent {\it \footnotesize 2000 Mathematics Subject
Classification}. {\scriptsize  35J20, 35J60, 35J70, 49R50}.\\
{\it \footnotesize Key words}. {\scriptsize quasilinear elliptic
operator, $p$-Laplacian, ground state, positive solutions,
Hardy-Sobolev-Maz'ya inequality.}

\mysection{\bf Introduction}
The term ``inequalities of Hardy-Sobolev type" refers,
somewhat vaguely, to families of inequalities that in some
way interpolate the Hardy inequality
\begin{equation}
\label{Hardy} \int_{\Omega}|\nabla u(x)|^p\dx\ge C(N,p,K,\Omega)
\int_{\Omega} \dfrac{|u(x)|^p}{\mathrm{dist}(x,K)^p}\dx\qquad
u\in C_0^\infty(\Omega\setminus K),
\end{equation}
where $\Omega\subset\R^N$ is an open domain and
$K\subset\bar{\Omega}$ is a nonempty closed set, and the Sobolev
inequality
\begin{equation}
\int_{\Omega}|\nabla u(x)|^p\dx\ge C \left(\int_{\Omega}
|u(x)|^{p^*}\dx\right)^{p/p^*}\qquad u\in C_0^\infty(\Omega),
\end{equation}
where $C>0$, $1<p<N$, and $p^*\eqdef pN/(N-p)$ is the
corresponding Sobolev exponent. Throughout the paper we repeatedly
consider the following particular case.
 \begin{example}\label{ex1} {\em Let
$\Omega=\R^N=\R^n\times\R^m$, where $1\leq m < N$, and let
$K=\R^n\times \{0\}$. We denote the variables of $\R^n$ and $\R^m$
as $z$ and $y$ respectively, and set $\R_0^N\eqdef\R^n\times
(\R^m\setminus \{0\})$. It is well known that the Hardy inequality
\eqref{Hardy} holds with the best constant
\begin{equation}
C(N,p,\R^n\times \{0\},\R^N)=\left|\dfrac{m-p}{p}\right|^p.
\end{equation}

An elementary family of Hardy-Sobolev inequalities can be obtained
by H\"older interpolation between the Hardy and the Sobolev
inequalities. More significant inequalities of Hardy-Sobolev type
with the best constant in the Hardy term can be derived as
consequences of Caffarelli-Kohn-Nirenberg inequality (\cite{CKN,
Ilyin}) that provides estimates in terms of the weighted gradient
norm $\int |\xi|^\alpha|\nabla u|^p{\mathrm d}\xi$. The
substitution $u=|y|^\beta v$ into the Caffarelli-Kohn-Nirenberg
inequality can be used to produce inequalities that combine terms
with the critical exponent and with the Hardy potential. Such
inequalities are known as Hardy-Sobolev-Maz'ya (or HSM for
brevity) inequalities. In particular, in \cite[Section 2.1.6,
Corollary 3]{Mazya} Maz'ya proved the HSM inequality
\begin{multline}
\label{HSM}   \int_{\R_0^N}|\nabla
u|^2\dy\dz-\left(\dfrac{m-2}{2}\right)^2\int_{\R_0^N}
\dfrac{|u|^2}{|y|^2}\dy\dz\geq\\[3mm]
C\left(\int_{\R_0^N} |u|^{2^*}\dy\dz\right)^{2/2^*}\qquad
u\in C_0^\infty(\R_0^N),
\end{multline}
where $C>0$, $N>2$, and $1\le m<N$. This HSM inequality is false
for $m=N$ and reduces to the Sobolev inequality for $m=2$. Since
the left-hand side of \eqref{HSM} induces a Hilbert norm, the
inequality holds on $\mathcal D^{1,2}(\R_0^N)$, the completion of
$C_0^\infty(\R_0^N)$ in the gradient norm, which coincides with
$\mathcal D^{1,2}(\R^N)$ for all $m> 1$, in particular,
$C_0^\infty(\R_0^N)$ may be replaced by $C_0^\infty(\R^N)$ unless
$m=1$.
 }
\end{example}
A joint paper of Filippas, Maz'ya and Tertikas \cite{FMT} gives
the following generalization of the HSM inequality \eqref{HSM}.
 \begin{example}\label{ex2} {\em
Let $2\le p <N$, $p\neq m< N$, and let $\Omega\subset\R^N$ be a
bounded domain. Let $K$ be a compact $C^2$-manifold without
boundary embedded in $\R^N$, of codimension $m$ such that
$K\Subset\Omega$ for $1 < m <N$ (i.e., $K$ is compact in
$\Omega$), or $K =
\partial\Omega$ for $m = 1$. Assume further that
\be\label{convcond} -\Delta_p\left[
\,\mathrm{dist}\,(\cdot,K)^{(p-m)/(p-1)}\right]\ge 0 \qquad \mbox
{ in } \Omega\setminus K , \ee where
$\Delta_p(u)\eqdef\nabla\cdot(|\nabla u|^{p-2}\nabla u)$ is the
$p$-Laplacian. Then for all $u\in C_0^{\infty}(\Omega\setminus K)$
we have
\begin{equation}
\label{HSMp}  \int_\Omega|\nabla
u(x)|^p\dx-\left|\dfrac{m-p}{p}\right|^p\int_\Omega
\dfrac{|u(x)|^p}{\mathrm{dist}\,(x,K)^p}\dx \geq
C\left(\int_\Omega |u(x)|^{p^*}\dx\right)^{p/p^*} . \end{equation}
 }
\end{example}

For $N=3$ Benguria, Frank and Loss \cite{BFL} have shown recently
that the best constant $C$ in \eqref{HSM} is the Sobolev constant
$S_3$. Mancini and Sandeep \cite{Sandeep} have studied the analog
of HSM on the hyperbolic space and its close connection to the
original HSM inequality.

In the present paper we consider a nonnegative functional $Q$ of
the form
\begin{equation}
\label{Q} Q(u)\eqdef\int_\Omega\left(|\nabla u|^p+V|u|^p\right)\dx
\qquad u\in \core,
\end{equation}
where $\Omega\subseteq\R^N$ is a domain, $V\in
L_\mathrm{loc}^\infty(\Omega)$, and $1<p<\infty$. We study several
questions related to extensions of inequalities \eqref{HSM} and
\eqref{HSMp}. In Section~\ref{sec2}, we deal with generalizations
of these HSM inequalities for the functional $Q$. It turns out,
that in the subcritical case a {\em weighted} HSM inequality holds
true, where the weight appears in the Sobolev term. In the
critical case, one needs to add a Poincar\'e-type term (a
one-dimensional $p$-homogeneous functional), and we call it
Hardy-Sobolev-Maz'ya-Poincar\'e (or HSMP for brevity) inequality.
We show that under ``small" perturbations such HSM-type
inequalities are preserved (with the original Sobolev weight). We
also address the question concerning the optimal weight in the
generalized HSM inequality.

 In Section~\ref{sec4}, we study a natural energy space
$\mathcal D^{1,2}_V(\Omega)$ for nonnegative singular
Schr\"odinger operators, and discuss the existence of minimizers
for the HSM inequality in this space, that is, minimizers of the
equivalent Caffarelli-Kohn-Nirenberg inequality. Finally, in
Section~\ref{sec5} we prove that a related functional $\hat Q$
which satisfies  $C^{-1}Q\le\hat Q\le CQ$ for some $C>0$ induces a
norm on the cone of nonnegative $\core$-functions. For $p=2$, this
norm coincides (on the above cone) with the $\mathcal
D^{1,2}_V(\Omega)$-norm defined in \cite{ky2}. It is our hope that
this approach paves the way to circumvent the general lack of
convexity of the nonnegative functional $Q$ for $p\neq 2$.

\mysection{Generalization of HSM inequality}\label{sec2} We need
the following definition.
\begin{definition}
\label{GS}{\em Let $\Omega\subseteq\R^N$ be a domain, $V\in
L_\mathrm{loc}^\infty(\Omega)$, and $1<p<\infty$. Assume that the
functional
\begin{equation}
\label{Q1} Q(u)=\int_\Omega\left(|\nabla u|^p+V|u|^p\right)\dx
\end{equation}
is nonnegative on $\core$. A function  $\varphi\in C^1(\Omega)$ is
a {\em ground state} for the functional $Q$ if $\varphi$ is an
$L^p_{\mathrm{loc}}$-limit of a nonnegative sequence $\{
\varphi_k\}\subset \core$ satisfying
$$
 Q(\varphi_k)\to 0, \qquad \mbox{and }\quad
 \int_B|\varphi_k|^p\dx=1,
$$
for some fixed $B\Subset \Omega$ (such a sequence $\{\varphi_k\}$
is called a {\em null sequence}). The functional \eqref{Q} is
called {\em critical} if $Q$ admits a ground state and {\em
subcritical} or {\em weakly coercive} if it does not.
 }
\end{definition}
The following statement (see \cite{ky5}) is a generalization of
HSM inequality. Inequality \eqref{HSMP'} might be called
Hardy-Sobolev-Maz'ya-Poincar\'e (HSMP)-type inequality.
\begin{theorem}
\label{ky5} Let $Q$ be a nonnegative functional on $\core$ of the
form \eqref{Q}, and let $1<p<N$.

\vspace{3mm}

(i) The functional $Q$ does not admit a ground state if and only
if there exists a positive continuous function $W$ such that \be
\label{generalHSM}
  Q(u)\geq \left(\int_{\Omega} W|u|^{p^*}\mathrm{d}x\right)^{p/p^*} \qquad
 u\in C_0^\infty(\Omega).\ee

\vspace{3mm}

 (ii) If $Q$ admits a ground state $\varphi$, then $\varphi$ is the
unique global positive (super)-solution of the Euler-Lagrange
equation \be \label{groundstate}
 Q^\prime
(u)\eqdef -\Delta_p(u)+V|u|^{p-2}u=0\quad \mbox{in } \Omega.\ee
Moreover, there exists a positive continuous function $W$ such
that for every function $\psi\in C_0^\infty(\Omega)$ with
$\int_\Omega \psi \varphi \dx \neq 0$, the following inequality
holds
 \be\label{HSMP'}
  Q(u)+C\left|\int_{\Omega} \psi u\,\mathrm{d}x\right|^p
  \geq \left(\int_{\Omega} W|u|^{p^*}\mathrm{d}x\right)^{p/p^*}\qquad  u\in
C_0^\infty(\Omega)\ee with some suitable constant $C>0$.
 \end{theorem}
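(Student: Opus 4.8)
The plan is to treat the two implications of part (i) and the two assertions of part (ii) as largely independent modules, relying throughout on the Agmon–Allegretto–Piepenbrink-type theory of criticality for the $p$-Laplacian and on known characterizations of null sequences. For the ``if'' direction of (i), suppose \eqref{generalHSM} holds for some positive continuous $W$. If $Q$ admitted a ground state, there would be a null sequence $\{\varphi_k\}$ with $Q(\varphi_k)\to 0$ and $\int_B|\varphi_k|^p\dx=1$; but null sequences are known to converge in $L^p_\loc$ to the ground state $\varphi>0$, so $\liminf_k\int_B W|\varphi_k|^{p^*}\dx>0$ (using $W$ bounded below on $B\Subset\Omega$ and the normalization together with boundedness of $\{\varphi_k\}$ in $\mathcal D^{1,p}_\loc$), contradicting $Q(\varphi_k)\to 0$ via \eqref{generalHSM}. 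Hence $Q$ is subcritical.

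For the ``only if'' direction of (i) — the main substantive content — I would use the standard ground-state transform. Since $Q$ is subcritical, there is a positive (super)solution $v$ of $Q'(v)=0$ (or one may work directly with a positive $v$ realizing a suitable Harnack-type inequality). Writing $u=v\,w$, a computation gives $Q(u)\ge Q_v(w)$ where $Q_v$ is (comparable to) a weighted $p$-Laplacian functional $\int_\Omega v^p|\nabla w|^p\,\mathrm{d}x$ plus lower-order terms, and the subcriticality of $Q$ translates into the positivity (in the appropriate sense) of this weighted functional. One then invokes a weighted Sobolev inequality — this is exactly where the Caffarelli–Kohn–Nirenberg machinery alluded to in the introduction enters — to bound $\int_\Omega v^p|\nabla w|^p\,\mathrm{d}x$ from below by $\big(\int_\Omega \tilde W\,|vw|^{p^*}\,\mathrm{d}x\big)^{p/p^*}$ with some positive continuous weight $\tilde W$, and undoing the substitution yields \eqref{generalHSM} with $W$ a positive continuous function built from $v$ and $\tilde W$. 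The delicate point is to obtain the weighted Sobolev inequality with a \emph{strictly positive continuous} weight rather than one that degenerates; this requires a localization/partition-of-unity argument gluing local Sobolev inequalities (valid on balls) with a careful choice of weight decaying near the singularities and near infinity, controlled by $v$ via Harnack's inequality. I expect this gluing — balancing the weight so the global inequality survives — to be the principal obstacle.

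For part (ii), the uniqueness of the global positive supersolution of \eqref{groundstate} when a ground state exists is the $p$-Laplacian analogue of the classical fact that a critical Schrödinger operator has a one-dimensional cone of positive solutions; I would cite it from the criticality theory developed in the references (\cite{ky5}, \cite{ky2}) rather than reprove it, noting that the ground state $\varphi$ is such a supersolution and any other would force $Q$ to be subcritical, a contradiction. For the HSMP inequality \eqref{HSMP'}, the strategy is a perturbation/compactness argument: the functional $u\mapsto Q(u)+C|\int_\Omega\psi u\,\mathrm{d}x|^p$ should be shown to be subcritical (no null sequence), because any candidate null sequence would converge in $L^p_\loc$ to a multiple of $\varphi$, and the condition $\int_\Omega\psi\varphi\,\mathrm{d}x\ne0$ makes the added Poincaré-type term bounded away from zero along that sequence, contradicting convergence to zero of the whole expression. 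Once the perturbed functional is subcritical, part (i) (applied to the functional $Q$ enlarged by the rank-one term, or an approximation thereof by a genuine potential) delivers a positive continuous weight $W$ and inequality \eqref{HSMP'}. The technical nuisance here is that $|\int_\Omega\psi u\,\mathrm{d}x|^p$ is not of the form $\int V|u|^p\dx$, so one must either extend the subcriticality-implies-weighted-Sobolev result to this class of nonlocal perturbations or replace the rank-one term by a nearby absolutely continuous one and track the constants; either way the argument is routine once the subcriticality of the perturbed functional is in hand.
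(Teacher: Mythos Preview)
The paper does not give a proof of this theorem: it is quoted as a known result from \cite{ky5} (see the sentence immediately preceding the statement). So there is no in-paper proof to compare your attempt to.

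Assessing your sketch on its merits: the ``if'' direction of (i) is fine (a null sequence $\{\varphi_k\}$ converges a.e.\ along a subsequence to $\varphi>0$, so Fatou gives $\liminf\int_B W|\varphi_k|^{p^*}>0$, contradicting $Q(\varphi_k)\to 0$). The plan for (ii) is also reasonable: the uniqueness statement is exactly the criticality result you would cite, and for \eqref{HSMP'} the observation that a would-be null sequence for the augmented functional must converge in $L^p_\loc$ to a nonzero multiple of $\varphi$, forcing $\left|\int_\Omega\psi\varphi_k\dx\right|^p$ to stay bounded below, is the right mechanism. Your worry about the rank-one term not being of the form $\int V|u|^p\dx$ is not fatal: in \cite{ky5} the Poincar\'e-type inequality with an $L^p$-weight is proved first, and from there one reaches \eqref{HSMP'} without needing to recast the rank-one term as a potential.

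The genuine gap is in the ``only if'' direction of (i). The ground-state transform $u=vw$ and the estimate $Q(u)\asymp \int_\Omega v^p|\nabla w|^p\dx+\cdots$ (Proposition~\ref{prop:superPicone}) are available for \emph{any} positive solution $v$ of $Q'(v)=0$, and such $v$ exists in both the critical and the subcritical case. Your outline does not explain where subcriticality actually enters; as written, the same ``localization/partition-of-unity'' step could be attempted in the critical case, where \eqref{generalHSM} is known to fail. The missing ingredient is the characterization of subcriticality used in \cite{ky3,ky5}: $Q$ is subcritical if and only if there exists a positive continuous $W_0$ with $Q(u)\ge \int_\Omega W_0|u|^p\dx$ on $\core$. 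It is this Hardy-type lower-order control---not Caffarelli--Kohn--Nirenberg, which is tailored to power weights rather than to an arbitrary $v$---that absorbs the cross terms produced by the cutoffs in the gluing and yields a strictly positive continuous $W$ in \eqref{generalHSM}. Without that input your partition-of-unity step cannot close.
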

\begin{remark}{\em
For the relationships between the criticality of $Q$ in $\Omega$
and the $p$-capacity (with respect to the functional $Q$) of
closed balls see \cite[Theorem~4.5]{ky5} and \cite{Tr1,Tr2}. }
\end{remark}

Theorem~\ref{ky5} applies to the case of $\Omega=\R^N_0$ and the
Hardy potential (see Example~\ref{ex1}, and in particular
\eqref{HSM}), but it does not specify that the weight $W$ in the
Sobolev term is the constant function. We note that
Example~\ref{ex2} provides another Hardy-type functional
satisfying the HSM inequality with the weight
$W=\mathrm{constant}$.

On the other hand, let $\Omega=\R^N_0$, with  $m=N$, then the
corresponding Hardy functional admits a ground state
$\varphi(x)= |x|^{(p-N)/p}$, and therefore the HSM inequality
does not hold with any weight. Moreover, the HSMP inequality
\eqref{HSMP'}, which by Theorem~\ref{ky5} holds with some weight $W$,
is false with the weight $W=\mathrm{constant}$ (\cite{FT} and
Example~\ref{ex4}).

Let us present few other examples which illustrate further the
question of the admissible weights in the HSM and HSMP
inequalities. The first two examples are elementary but general.
In the first one the HSM inequality \eqref{generalHSM} holds with
the constant weight function, while in the second example
(Example~\ref{ex4}) such an inequality is false.
\begin{example}\label{ex3} {\em
Consider a nonnegative functional $Q$ of the form \eqref{Q}, where
$V\in L_\mathrm{loc}^\infty(\Omega)$ is nonzero function, and
$1<p<N$. For $\lambda\in \R$ we denote
$$
Q_\lambda(u)\eqdef \int_\Omega(|\nabla u|^p+\lambda V|u|^p)\dx.
$$
Then for every $\lambda\in(0,1)$ there exists $C>0$ such that
\be\label{HSM1} Q_\lambda(u)\ge C\|u\|^p_{p^*} \qquad
u\in\core,
 \ee
where $C=C(N,p,\lambda)>0$. This HSM inequality follows from
$$
Q_\lambda(u)=(1-\lambda)\int_\Omega|\nabla u|^p\dx+\lambda Q(u)\ge
(1-\lambda)\int_\Omega|\nabla u|^p\dx,
$$
and the Sobolev inequality.
 }
\end{example}

\begin{example}\label{ex4}{\em
Let $Q\ge 0$ be as in \eqref{Q}, where $1<p<N$.  Suppose that $Q$
admits ground state $\varphi\notin L^{p^*}(\Omega)$, and let
$\{\varphi_k\}$ be a null sequence (see Definition~\ref{GS}) such
that $\varphi_k\to\varphi$ locally uniformly in $\Omega$  (for the
existence of a locally uniform convergence null sequence, see
\cite[Theorem~4.2]{ky5}). Let $V_1\in L^\infty(\Omega)$ be a
nonzero nonnegative function with a compact support. Then
$$Q(\varphi_k)+\int_\Omega V_1|\varphi_k|^p\dx\to \int_\Omega V_1|\varphi|^p\dx<\infty,$$
while Fatou's lemma implies that $\|\varphi_k\|_{p^*}\to\infty$.
Therefore, the subcritical functional
$$
Q_{V_1}(u)\eqdef Q(u)+\int_\Omega V_1|u|^p\dx
$$
does not satisfy the HSM inequality \eqref{generalHSM} with the
constant weight. Similar argument shows that the critical
functional $Q$ does not satisfy the HSMP inequality with the
constant weight.
 }
\end{example}
\begin{remark}\label{rem1}{\em
Example~\ref{ex4} can be slightly generalized by replacing the
assumption $\varphi\notin L^{p^*}(\Omega)$ with  $\varphi\notin
L^{p^*}(\Omega, W\dx)$, where $W$ is a continuous positive weight
function. Under this assumption it follows that the functional
$Q_{V_1}$ and $Q$ do not satisfy HSM and respectively HSMP
inequality with the weight $W$.
 }
 \end{remark}
\begin{example}\label{ex6}{\em
In \cite[Theorem~C]{FTT}, Filippas, Tertikas and Tidblom proved
that a nonnegative functional $Q$ of the form \eqref{Q} with $p=2$
satisfies the HSM inequality in a smooth domain $\Omega$ with
$W=\mathrm{constant}$ if the equation $Q'(u)=0$ has a positive
$C^2$-solution $\varphi$ such that the following $L^1$-Hardy-type
inequality
$$
\int_{\Omega}\varphi^{2(N-1)/(N-2)}|\nabla u|\dx \ge
C\int_{\Omega}\varphi^{N/(N-2)}|\nabla \varphi|\,|u|\dx \qquad
 u\in C_0^\infty(\Omega).$$ holds true.
 }
\end{example}
\begin{example}\label{ex5}{\em
Consider the function
$$X(r)\eqdef \left(|\log r|\right)^{-1} \qquad r>0.$$
Let $\Omega\subset\R^N$, $N>2$, be a bounded domain and let
$D>\sup_{x\in\Omega}|x|$.
The following inequality is due to Filippas and Tertikas 
\cite[Theorem A, and the corresponding Corrigendum]{FT}, see also \cite{AFT}.
\begin{multline}
\label{HSMFT}
\int_{\Omega}|\nabla
u|^2\dx-\left(\dfrac{N-2}{2}\right)^2\int_{\Omega}
\dfrac{|u|^2}{|x|^2}\dx\geq\\[3mm]
C\left(\int_{\Omega} |u|^{2^*}
X(|x|/D)^{1+N/(N-2)}\dx\right)^{2/2^*}\qquad  u\in
C_0^\infty(\Omega).
\end{multline}
In this case the HSM inequality does not
hold with $W=\mathrm{constant}$ (cf. Example~\ref{ex4} and
Remark~\ref{rem1}).
 }
\end{example}
We now consider the question whether the weight $W$ in the HSM
inequality \eqref{generalHSM} is preserved (up to a constant
multiple) under small perturbations.
\begin{theorem}
\label{thm:VW} Let $\Omega$ be a domain in $\R^N$,  $N>2$,
and let $V\in L_\mathrm{loc}^\infty(\Omega)$.  Assume that the
following functional $Q$ satisfies the HSM inequality
\begin{equation}
\label{WW} Q(u)\eqdef\int_\Omega\left(|\nabla
u|^2+V|u|^2\right)\dx \ge\left(\int_\Omega
W|u|^{2^*}\dx\right)^{2/2^*} \qquad u\in \core
\end{equation}
 with some
positive continuous function $W$. Let $\tilde{V}\in
L^{\infty}_{\mathrm{loc}}(\Omega)$ be a nonzero potential
satisfying \be \label{VW} |\tilde{V}|^{N/2}W^{(2-N)/2}\in
L^{1}(\Omega), \ee and consider the one-parameter family of
functionals $\tilde{Q}_\lambda$ defined by
$$
\tilde{Q}_\lambda(u)\eqdef Q(u)+\lambda\int_\Omega
\tilde{V}|u|^2\dx,
$$
where $\lambda \in\R$.

(i) If $\tilde{Q}_\lambda$ is nonnegative on $\core$ and does not
admit a ground state, then \be \label{subcrt} \tilde{Q}_\lambda
(u)\geq C\left(\int_\Omega W|u|^{2^*}\dx\right)^{2/2^*} \qquad
u\in \core, \ee where $C$ is a positive constant.

(ii) If $\tilde{Q}_\lambda$ is nonnegative on $\core$ and admits a
ground state $v$, then for every $\psi\in\core$ such that
$\int_\Omega\psi v\dx\neq 0$ we have  \be \label{crtcal}
\tilde{Q}_\lambda(u)+C_1\left(\int_\Omega\psi u\dx\right)^2 \ge
C\left(\int_\Omega W|u|^{2^*}\dx\right)^{2/2^*} \qquad u\in \core
\ee with suitable positive constants $C, C_1>0$.

(iii) The set $$ S\eqdef \{\lambda\in\R\mid \tilde{Q}_\lambda\geq
0
 \mbox{ on } \core\}$$ is a closed interval with a nonempty
interior which is bounded if and only if $\tilde{V}$ changes its
sign on a set of a positive measure in $\Omega$. Moreover,
$\lambda \in
\partial S $ if and only if $\tilde{Q}_\lambda$ is critical in $\Omega$.
\end{theorem}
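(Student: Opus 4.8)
The plan is to prove part~(i) first — it is the engine of the whole statement — then to rerun its argument almost verbatim for part~(ii), and finally to deduce part~(iii) by convexity in $\lambda$ together with the two directions of Theorem~\ref{ky5}. Throughout I would write $\tilde V=\tilde V_+-\tilde V_-$ for the positive and negative parts of $\tilde V$ and put $\|\tilde V\|_*:=\bigl(\int_\Omega|\tilde V|^{N/2}W^{(2-N)/2}\dx\bigr)^{2/N}$, finite by \eqref{VW}. The single elementary ingredient, used everywhere, is the H\"older--Sobolev absorption bound: for every measurable $E\subseteq\Omega$ and $u\in\core$,
\[
\int_E|\tilde V|\,|u|^2\dx\le\Bigl(\int_E|\tilde V|^{N/2}W^{(2-N)/2}\dx\Bigr)^{2/N}\Bigl(\int_\Omega W|u|^{2^*}\dx\Bigr)^{2/2^*}\le\Bigl(\int_E|\tilde V|^{N/2}W^{(2-N)/2}\dx\Bigr)^{2/N}Q(u),
\]
by H\"older with exponents $N/2$ and $2^*/2$ followed by \eqref{WW}. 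Two consequences: with $E=\Omega$ it gives $\tilde Q_\lambda(u)\ge(1-|\lambda|\,\|\tilde V\|_*)Q(u)$, so by \eqref{WW} and the converse direction of Theorem~\ref{ky5}(i) the functional $\tilde Q_\lambda$ satisfies an HSM inequality with weight proportional to $W$ (hence is subcritical) for $|\lambda|<1/\|\tilde V\|_*$; in particular $0\in\mathrm{int}\,S$. And since $|\tilde V|^{N/2}W^{(2-N)/2}\in L^1(\Omega)$, along an exhaustion $K\uparrow\Omega$ by compacts the tail $\eta_K:=\bigl(\int_{\Omega\setminus K}|\tilde V|^{N/2}W^{(2-N)/2}\dx\bigr)^{2/N}$ tends to $0$.

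For part~(i) I would use a self-improving estimate. Assume $\lambda>0$ (the case $\lambda<0$ is symmetric after $\tilde V\mapsto-\tilde V$; $\lambda=0$ is \eqref{WW}). As $\tilde Q_\lambda\ge0$ on $\core$ admits no ground state, Theorem~\ref{ky5}(i) furnishes a positive continuous $W_\lambda$ with $\tilde Q_\lambda(u)\ge\bigl(\int_\Omega W_\lambda|u|^{2^*}\dx\bigr)^{2/2^*}$. Dropping the nonnegative term $\lambda\int\tilde V_+|u|^2$ and using \eqref{WW},
\[
\tilde Q_\lambda(u)\ge\Bigl(\int_\Omega W|u|^{2^*}\dx\Bigr)^{2/2^*}-\lambda\int_\Omega\tilde V_-|u|^2\dx,
\]
and I split $\int_\Omega\tilde V_-|u|^2=\int_K\tilde V_-|u|^2+\int_{\Omega\setminus K}\tilde V_-|u|^2$ for a compact $K\Subset\Omega$ still to be chosen. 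The far-field part is controlled by the absorption bound, $\int_{\Omega\setminus K}\tilde V_-|u|^2\le\eta_K\bigl(\int_\Omega W|u|^{2^*}\bigr)^{2/2^*}$. The compact part is controlled crudely against $W_\lambda$: since $\tilde V\in L^\infty_{\mathrm{loc}}$ and $\inf_K W_\lambda>0$, H\"older on $K$ gives $\int_K\tilde V_-|u|^2\le\|\tilde V_-\|_{L^\infty(K)}|K|^{2/N}\bigl(\int_K|u|^{2^*}\bigr)^{2/2^*}\le A_K\,\tilde Q_\lambda(u)$ with $A_K:=\|\tilde V_-\|_{L^\infty(K)}|K|^{2/N}(\inf_K W_\lambda)^{-2/2^*}<\infty$. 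Feeding both back, $(1+\lambda A_K)\,\tilde Q_\lambda(u)\ge(1-\lambda\eta_K)\bigl(\int_\Omega W|u|^{2^*}\bigr)^{2/2^*}$; choosing $K$ so large that $\lambda\eta_K<\tfrac12$ yields \eqref{subcrt} with $C=\tfrac12(1+\lambda A_K)^{-1}$.

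Part~(ii) is the same computation carrying the rank-one term along. Here $\lambda\neq0$ automatically (since $\tilde Q_0=Q$ is subcritical), and w.l.o.g.\ $\lambda>0$. As $\tilde Q_\lambda$ is critical with ground state $v$, Theorem~\ref{ky5}(ii) applied to $\tilde Q_\lambda$ and the given $\psi$ (with $\int_\Omega\psi v\dx\neq0$) gives $C''>0$ and a positive continuous $W_\lambda$ with $\tilde Q_\lambda(u)+C''\bigl(\int_\Omega\psi u\dx\bigr)^2\ge\bigl(\int_\Omega W_\lambda|u|^{2^*}\bigr)^{2/2^*}$. Running the argument of part~(i) with the compact part now estimated by $\int_K\tilde V_-|u|^2\le A_K\bigl(\tilde Q_\lambda(u)+C''(\int_\Omega\psi u\dx)^2\bigr)$ produces $(1+\lambda A_K)\tilde Q_\lambda(u)+\lambda A_KC''\bigl(\int_\Omega\psi u\dx\bigr)^2\ge(1-\lambda\eta_K)\bigl(\int_\Omega W|u|^{2^*}\bigr)^{2/2^*}$, and taking $K$ with $\lambda\eta_K<\tfrac12$ gives \eqref{crtcal} with suitable $C,C_1>0$.

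For part~(iii): $\lambda\mapsto\tilde Q_\lambda(u)$ is affine and finite for each fixed $u\in\core$ (as $V,\tilde V\in L^\infty_{\mathrm{loc}}$ and $u$ has compact support), so $S=\bigcap_{u\in\core}\{\lambda:\tilde Q_\lambda(u)\ge0\}$ is an intersection of closed half-lines, hence a closed interval, with nonempty interior since $0\in\mathrm{int}\,S$. If $\tilde V\ge0$ a.e.\ then $\tilde Q_\lambda\ge Q\ge0$ for all $\lambda\ge0$, so $S$ is unbounded (similarly for $\tilde V\le0$ a.e.); conversely if $\tilde V>0$ on a set of positive measure, choosing (via a Lebesgue density point) $u\in\core$ with $\int_\Omega\tilde V|u|^2\dx>0$ forces $\tilde Q_\lambda(u)\to-\infty$ as $\lambda\to-\infty$, and symmetrically with the negative part, so $S$ is bounded — this is the stated dichotomy. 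For the last assertion: if $\lambda\in\mathrm{int}\,S$ and $\lambda\neq0$, pick $\mu\in S$ with $\lambda$ strictly between $0$ and $\mu$; then $\tilde Q_\lambda=(1-\tfrac\lambda\mu)Q+\tfrac\lambda\mu\tilde Q_\mu\ge(1-\tfrac\lambda\mu)\bigl(\int_\Omega W|u|^{2^*}\bigr)^{2/2^*}$, an HSM inequality with weight proportional to $W$, so $\tilde Q_\lambda$ is subcritical by Theorem~\ref{ky5}(i) (for $\lambda=0$ this is the hypothesis); hence $\tilde Q_\lambda$ critical $\Rightarrow\lambda\in\partial S$. Conversely, if $\lambda\in\partial S$ then $\tilde Q_\lambda\ge0$, and were it subcritical, part~(i) would give $\tilde Q_\lambda(u)\ge C\bigl(\int_\Omega W|u|^{2^*}\bigr)^{2/2^*}$, whence $\tilde Q_{\lambda\pm\delta}(u)\ge(C-|\delta|\,\|\tilde V\|_*)\bigl(\int_\Omega W|u|^{2^*}\bigr)^{2/2^*}\ge0$ for small $\delta$, contradicting $\lambda\in\partial S$; so $\tilde Q_\lambda$ is critical. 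The only genuinely nontrivial point is the self-improvement in part~(i): a subcritical $\tilde Q_\lambda$ a priori controls only its own, possibly degenerate, weight $W_\lambda$, and the device that makes the original $W$ survive is precisely the splitting of the ``bad'' part of $\tilde V$ into a far-field piece absorbed against $W$ via \eqref{VW} and a compactly supported piece absorbed against $W_\lambda$, followed by self-absorption; everything else is a rerun of this together with soft convexity and the two halves of Theorem~\ref{ky5}.
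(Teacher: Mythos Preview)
Your proof is correct, and the overall logic is sound in all three parts. The route, however, is genuinely different from the paper's.

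For parts (i)--(ii) the paper argues by contradiction through the energy-space machinery: assuming \eqref{subcrt} (resp.\ \eqref{crtcal}) fails, one gets a normalized sequence $u_k$ with $\|u_k\|_{\mathcal D^{1,2}_{\lambda\tilde V}}\to 0$; the continuous embedding $\mathcal D^{1,2}_{\lambda\tilde V}(\Omega)\hookrightarrow W^{1,2}_{\mathrm{loc}}(\Omega)$ (from \cite{ky2}) forces $u_k\to 0$ in $W^{1,2}_{\mathrm{loc}}$, which kills the compact part $\int_K|\tilde V||u_k|^2$, while the same H\"older bound you use handles the tail; this yields $u_k\to 0$ in $\mathcal D^{1,2}_V(\Omega)$, contradicting the normalization via \eqref{WW}. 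You instead give a direct, quantitative self-improvement: invoke Theorem~\ref{ky5} to produce an auxiliary weight $W_\lambda$ for $\tilde Q_\lambda$, absorb the compact part of $\int\tilde V_-|u|^2$ against $W_\lambda$ and the tail against $W$, and close the loop. Your argument is more elementary (it avoids the construction and local-Sobolev embedding of $\mathcal D^{1,2}_{\lambda\tilde V}$) and yields an explicit constant; the paper's argument is shorter once the $\mathcal D^{1,2}_V$ framework is in place and treats (i) and (ii) uniformly without splitting $\tilde V$ into its signed parts.

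For part (iii) the paper cites \cite[Proposition~4.3]{ky3} for the interval structure and the implication ``$\lambda\in\mathrm{int}\,S\Rightarrow$ subcritical'', and then uses part (i) together with the global H\"older bound to show ``subcritical $\Rightarrow\lambda\in\mathrm{int}\,S$''. You replace the citation by the transparent affine/convexity argument ($S$ as an intersection of closed half-lines, and the convex combination $\tilde Q_\lambda=(1-\lambda/\mu)Q+(\lambda/\mu)\tilde Q_\mu$), which is entirely self-contained. Both proofs use the identical absorption step for the converse direction at the boundary.
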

\begin{proof} (i)--(ii)
 Let $\mathcal D^{1,2}_{\lambda\tilde{V}}(\Omega)$ denote the completion of $\core$ with
respect to the norm defined by the square root of the left-hand
side of \eqref{subcrt} if $\tilde{Q_\lambda}$ does not admit a
ground state, and by the square root of the left-hand side of
\eqref{crtcal} if $\tilde{Q_\lambda}$ admits a ground state (see
\cite{ky2}). Similarly, we denote by $\mathcal
D^{1,2}_{V}(\Omega)$ the completion of $\core$ with respect to the
norm defined by the square root of the left-hand side of
\eqref{WW}. We denote the norms on $\mathcal
D^{1,2}_{\lambda\tilde{V}}(\Omega)$ and $\mathcal
D^{1,2}_{V}(\Omega)$ by $\|\cdot\|_{\mathcal
D^{1,2}_{\lambda\tilde{V}}}$ and $\|\cdot\|_{\mathcal
D^{1,2}_{V}}$ respectively.

Assume  that \eqref{subcrt} (respect. \eqref{crtcal}) does not
hold. Then there exists a sequence $\{u_k\}\subset\core$ such that
 \be\label{null}
\|u_k\|_{\mathcal D^{1,2}_{\lambda\tilde{V}}}\to 0,  \qquad
\mbox{and } \int_\Omega W|u_k|^{2^*}\dx=1.
 \ee
  By \cite[Proposition~3.1]{ky2},
the space $\mathcal D^{1,2}_{\lambda\tilde{V}}(\Omega)$ is
continuously imbedded into  $W^{1,2}_{\mathrm{loc}}(\Omega)$ and
therefore, $u_k\to 0$ in $W^{1,2}_{\mathrm{loc}}(\Omega)$.
Consequently, for any $K\Subset \Omega$ we have \be\label{intK}
\lim_{k\to\infty}\int_{K} |\tilde{V}||u_k|^2\dx=0. \ee

On the other hand, \eqref{VW} and  H\"older inequality imply that
for any $\varepsilon>0$ there exists $K_\varepsilon\Subset \Omega$
such that
 \be\label{holder}
\left|\int_{\Omega\setminus K_\varepsilon}\!\!\!\!
\tilde{V}|u_k|^2\dx\right|\!\leq \!\left(\int_{\Omega\setminus
K_\varepsilon}\!\!\!|\tilde{V}|^{N/2}W^{(2-N)/2}\dx\right)^{2/N}
\!\!\left(\int_\Omega
\!\!W|u_k|^{2^*}\dx\right)^{2/{2}^*}\!\!\!\!<\varepsilon.
 \ee

 Since
$$\|u_k\|_{\mathcal D^{1,2}_{V}}\leq \|u_k\|_{\mathcal
D^{1,2}_{\lambda\tilde{V}}}+\left|\int_\Omega
\lambda\tilde{V}|u_k|^2\dx\right|^{1/2},$$
it follows from \eqref{null}--\eqref{holder} that the sequence
$u_k \to 0$  in $\mathcal D^{1,2}_V(\Omega)$. Therefore,
\eqref{WW} implies that  $\int_\Omega W|u_k|^{2^*}\dx\to 0$ which
contradicts the assumption $\int_\Omega W|u_k|^{2^*}\dx=1$.
Consequently, \eqref{subcrt} (resp. \eqref{crtcal}) holds true.

\vskip 3mm

(iii) It follows from \cite[Proposition~4.3]{ky3} that $S$ is an
interval, and that $\lambda \in \mathrm{int}\,S$ implies that
$Q_\lambda$ is subcritical in $\Omega$. The claim on the
boundedness of $S$ is trivial and left to the reader.

On the other hand,  suppose that for some $\lambda \in\R$ the
functional  $\tilde{Q}_\lambda$ is subcritical. By part (i),
$\tilde{Q}_\lambda$ satisfies the HSM inequality with weight $W$.
Therefore, \eqref{holder} (with $K_\varepsilon=\emptyset$) implies
that
\begin{equation}\label{0}
 \tilde{Q}_\lambda(u)\geq C\left(\int_\Omega
W|u|^{2^*}\dx\right)^{2/2^*}\geq C_1 \left|\int_\Omega
\tilde{V}|u|^{2}\dx\right| \qquad u\in \core.
\end{equation}
Therefore, $\lambda\in \mathrm{int}\,S$.  Consequently,
$\lambda\in
\partial S$ implies that  $\tilde{Q}_{\lambda}$ is critical in $\Omega$. In particular, $0\in
\mathrm{int}\,S $.
\end{proof}
\begin{example}\label{ex7}
{\em Let $\Omega=\R^N$, where $N\geq 3$, and let $V\in
L^{N/2}(\R^N)$ such that $V\ngeq 0$ (so, $V$ is a short range
potential). Fix $\mu<(N-2)^2/4$. Then the classical Hardy
inequality together with Example~\ref{ex3} and
Theorem~\ref{thm:VW} imply that there exists $\lambda^*>0$ such
that for $\lambda<\lambda^*$, we have the following HSM inequality
\begin{multline}
\label{HSMmu}   \int_{\R^N}|\nabla u|^2\dx-\mu\int_{\R^N}
\dfrac{|u|^2}{|x|^2}\dx+\lambda \int_{\R^N}
V(x)|u|^2\dx\geq\\[3mm]
C_\lambda\left(\int_{\R^N} |u|^{2^*}\dx\right)^{2/2^*}\qquad
 u\in C_0^\infty(\R^N).
\end{multline}
On the other hand, if $\lambda=\lambda^*$, then the associated
functional is critical and satisfies the corresponding HSMP
inequality with the weight function $W=\mathrm{constant}$. Recall
that the HSM and HSMP inequalities for $\mu=(N-2)^2/4$ are false
with the weight $W=\mathrm{constant}$ (see, Example~\ref{ex4} and
\cite{FT}).
 }
\end{example}
\begin{example}\label{ex8} {\em Consider again Example~\ref{ex2} with
$p=2<N$, and $2\neq m<N$. By \cite[Theorem~1.1]{FMT}, there exists
$M\leq 0$ such that
 the following HSM
inequality holds true \begin{multline}\label{HSM2}
Q(u)\eqdef\int_\Omega|\nabla
u|^2\dx-\left(\frac{m-2}{2}\right)^2\int_\Omega\!\dfrac{|u|^2}{\mathrm{dist}\,(x,
K)^2}\dx - M \int_\Omega|u|^2\dx\\[3mm] \geq
C\left(\int_\Omega|u|^{2^*}\dx\right)^{2/2^*}\quad u\in
C^\infty_0(\Omega\setminus K).
\end{multline}
We note that if \eqref{convcond} is satisfied, then \eqref{HSM2}
holds with $M=0$.

Let  $V\in  L^{\infty}_{\mathrm{loc}}(\Omega)\cap L^{N/2}(\Omega)$
be a nonzero function, and consider the one-parameter family of
functionals $Q_\lambda$ defined by
$$Q_\lambda(u)\eqdef Q(u)+\lambda\int_\Omega V|u|^2\dx, $$
where $\lambda \in\R$. By Theorem~\ref{thm:VW}, the set $S$ of all
$\lambda$ such that $Q_\lambda$ is nonnegative on $\core$ is a
nonempty closed interval with a nonempty interior. Moreover, for
$\lambda \in \mathrm{int}\,S$ there exists  a positive constant
$c_{\lambda}$ such that
 \be\label{HSM6} Q_\lambda(u) \ge
c_{\lambda}\left(\int_\Omega|u|^{2^*}\dx\right)^{2/2^*}\qquad
 u\in C^\infty_0(\Omega\setminus K).
 \ee
On the other hand, if $\lambda \in \partial S$, then $Q_\lambda$
admits a ground state $v$. Therefore, Theorem~\ref{thm:VW} implies
that for every $\psi\in C^\infty_0(\Omega\setminus K)$ satisfying
$\int_\Omega\psi v\dx\neq 0$ there exist constants $C, C_1>0$ such
that
$$
Q_{\lambda}(u) +C\left(\int_\Omega u\psi\dx\right)^2 \geq
c_1\left(\int_\Omega|u|^{2^*}\dx\right)^{2/2^*}\quad  u\in
C^\infty_0(\Omega\setminus K).
$$
We note that if $K=\partial \Omega$ is smooth (that is, $m = 1$)
and $V=\mathbf{1}$, one actually deals with the case considered by
Brezis and Marcus in \cite[Theorem~1.1]{BM}.  In particular, let
$\lambda^*$ be the supremum of all  $\lambda\in \R$ such that the
inequality \be \label{BM0}
 \int_\Omega|\nabla
u|^2\dx-\frac{1}{4}\int_\Omega\!\dfrac{|u|^2}{\mathrm{dist}\,(x,
\partial \Omega)^2}\dx - \lambda \int_\Omega|u|^2\dx \geq 0 \qquad
u\in\core \ee holds true ($\lambda^*>-\infty$ and is attained by
\cite[Theorem~1.1]{BM}). Then Theorem~\ref{thm:VW} implies that
for each $\lambda<\lambda^*$ there exists $C_\lambda>0$ such that
\begin{multline} \label{BM}
 \int_\Omega|\nabla
u|^2\dx-\frac{1}{4}\int_\Omega\!\dfrac{|u|^2}{\mathrm{dist}\,(x,
\partial \Omega)^2}\dx - \lambda \int_\Omega|u|^2\dx\\[3mm] \geq
C_\lambda\left(\int_\Omega|u|^{2^*}\dx\right)^{2/2^*}\quad
u\in\core.
\end{multline}
Moreover, Theorem~\ref{thm:VW} implies that for $\lambda
=\lambda^*$, the functional defined by the left-hand side of
\eqref{BM} is critical, and satisfies the HSMP inequality with
weight $W=\mathrm{constant}$. In particular, the corresponding
Euler-Lagrange equation $Q_{\lambda^*}'(u)=0$ in $\Omega$  admits
a unique positive (super)-solution.

Theorem~1.1 of \cite{BM} has been extended by Marcus and Shafrir
in \cite[Theorem~1.2]{MS} to the case $1<p<\infty$ and a
perturbation $0< V(x)=O(\mathrm{dist}\,(x,
\partial \Omega)^{\gamma})$, where $\gamma > -p$
(cf. our assumption \eqref{VW}, where $p=2$). Following \cite{MS},
let $\lambda^*$ be the supremum of all $\lambda\in \R$ such that
the inequality \be \label{BM1}
 \int_\Omega|\nabla
u|^2\dx-\frac{1}{4}\int_\Omega\!\dfrac{|u|^2}{\mathrm{dist}\,(x,
\partial \Omega)^2}\dx - \lambda \int_\Omega V(x)|u|^2\dx \geq 0 \quad
u\in\core. \ee holds true. It follows that Theorem~\ref{thm:VW}
with the constant weight applies also to this functional if in
addition $V\in L^{\infty}_{\mathrm{loc}}(\Omega)\cap
L^{N/2}(\Omega)$.
 }
\end{example}
\begin{remark}\label{rem2}{\em
We note that even under the less restricted assumptions of
\cite[Theorem~1.2]{MS}, with $p=2$ and $\lambda=\lambda^*$, one
can show that the positive solution $u_*$ of Equation (1.14) in
\cite{MS} is actually a ground state. Therefore, $u_*$ is the
unique (up to a multiplicative constant) global positive
supersolution of that equation, and the corresponding functional
is critical.

Indeed, Lemma~5.1 of \cite{MS} implies that any positive
supersolution of \cite[Equation~(1.14)]{MS} satisfies
\be\label{in5}Cu(x)\geq \mathrm{dist}\,(x,
\partial \Omega)^{1/2}\qquad x\in\Omega.\ee
On the other hand, \cite[Theorem~1.2]{MS} implies that the
positive solution $u_*$ satisfies
 \be\label{asy5}
u_*(x) \asymp \mathrm{dist}\,(x,
\partial \Omega)^{1/2} \qquad x\in\Omega,\ee
where $f\asymp g$ means that there exists a positive constant $C$
such that $C^{-1}\leq f/g\leq C$ in $\Omega$. Now, take a positive
supersolution $u$, and let $\varepsilon$ be the maximal positive
number such that  $u-\varepsilon u_*\geq 0$ in $\Omega$. Note that
by \eqref{in5} and \eqref{asy5}, $\varepsilon$ is well defined. By
the strong maximum principle it follows that either $u=\varepsilon
u_*$, or $u-\varepsilon u_*>0$. Consequently, \eqref{in5} and
\eqref{asy5} imply that there exists a positive constant $C_1$
such that
$$u-\varepsilon u_*\geq C\mathrm{dist}\,(x,
\partial \Omega)^{1/2}\geq C_1u_*\qquad \mbox{ in } \Omega,$$
which is a contradiction to the definition of $\varepsilon$.
 }
\end{remark}

\mysection{The space $\mathcal D^{1,2}_V(\Omega)$ and minimizers
for the HSM inequality}\label{sec4}
 Consider again the HSM inequality
\eqref{HSM}. This inequality clearly extends to $\mathcal
D^{1,2}(\R^N)$ for $m>2$ and to $\mathcal D^{1,2}(\R^N_0)$ for
$m=1$, but since the quadratic form $Q(u)$ in the left-hand side
of \eqref{HSM} induces a scalar product on $C_0^\infty(\R^N_0)$,
the natural domain of $Q$ is the completion of
$C_0^\infty(\R^N_0)$ with respect to the norm $Q(\cdot)^{1/2}$.
Recall \cite{ky2} that given a general subcritical functional $Q$
of the form \eqref{Q} (with $p=2$), we denote such a completion by
$\mathcal D^{1,2}_V(\Omega)$. Similarly to the standard definition
of $\mathcal D^{1,2}(\R^N)$ for $N=1,2$, when $Q$ admits a ground
state, one appends to $Q(u)$ a correction term of the form
$\left(\int_{\Omega} \psi u \dx\right)^2$. Hence,  by
\eqref{generalHSM} and \eqref{HSMP'} the space $\mathcal
D^{1,2}_V(\Omega)$ is continuously imbedded into a weighted
$L^{2^*}$-space.

In the particular case \eqref{HSM}, $V$ is the Hardy potential
$[(m-2)/2]^2|y|^{-2}$. By \eqref{HSM}, the space $\mathcal
D^{1,2}_V(\R^N_0)$ is continuously imbedded into
$L^{2^*}(\R^N_0)$, thus its elements can be identified as
measurable functions. The substitution $u=|y|^{(2-m)/2}v$
transforms HSM inequality \eqref{HSM} into an inequality of
Caffarelli-Kohn-Nirenberg type:
\begin{multline} \label{HSM'} \int_{\R^N}|y|^{2-m}|\nabla v|^2\dy\dz\\[3mm] \geq
C\left(\int_{\R^N} |y|^{(2-m)2^*/2}|v|^{2^*}\dy\dz\right)^{2/2^*}
\;\;\;\;  v\in \mathcal
D^{1,2}(\R_0^N,|y|^{2-m}\dy\dz).\end{multline}

The left-hand side of \eqref{HSM'} defines a Hilbert space
isometric to $\mathcal D^{1,2}_V(\R_0^N)$. However, the Lagrange
density \be \label{Lagr} |\nabla
u|^2-\left(\frac{m-2}{2}\right)^2\frac{|u|^2}{|y|^2}\ee is no
longer integrable for an arbitrary $u\in\mathcal
D^{1,2}_V(\R^N_0)$. The integrable Lagrange density of
\eqref{HSM'}, $|y|^{2-m}|\nabla (u|y|^{(m-2)/2})|^2$ can be
equated to \eqref{Lagr} by partial integration when $u\in
C_0^\infty(\R^N_0)$, but this connection does not extend to the
whole of $\mathcal D^{1,2}_V(\R^N_0)$ as the terms that mutually
cancel in the partial integration on $C_0^\infty(\R^N_0)$ might
become infinite. In particular, it should not be expected a priori
that the minimizer for HSM inequality in $\mathcal
D^{1,2}_V(\R^N_0)$ would have a finite gradient in
$L^2(\R^N_0,\dx)$.

Existence of minimizers for the variational problem associated
with \eqref{HSM'} is proved in \cite{TerTin} for all codimensions
$0<m<N$, where $N>3$. The existence proof is based on
concentration compactness argument that utilizes invariance
properties of the problem. Similarly to other problems where lack
of compactness stems from a noncompact equivariant group of
transformations, some general domains and potentials admit
minimizers and some do not, and analogy with similar elliptic
problems in $\mathcal D^{1,2}(\R^N)$ provides useful insights (see
for example \cite{Smets}).
\mysection{Convexity properties of $Q$ for $p>2$}\label{sec5} The
definition of $\mathcal D^{1,2}_V(\Omega)$ cannot be applied to
other values of $p$, since for $p\neq 2$ the positivity of the
functional $Q$ on $\core$ does not necessarily imply its
convexity, and thus it does not give rise to a norm.  For the lack
of convexity when $p>2$, see an elementary one-dimensional
counterexample at the end of \cite{dPEM}, and also the proof of
Theorem~7 in \cite{GS}. For $p<2$, see \cite[Example~2]{FHTdT}.

On the other hand, by \cite[Theorem~2.3]{ky3}, the functional  $Q$
is nonnegative on $\core$ if and only if the equation $Q'(u)=0$ in
$\Omega$ admits a positive global solution $v$. With the help of
such a solution $v$, one has the identity \cite{DS,AH1,AH2}:
$$
Q(u)=\int_\Omega L_v(w)\dx \qquad u\in\coreplus,
$$
where $w\eqdef {u}/{v}$, the Lagrangian $L_v(w)$ is defined by
 \be\label{PiconeLagr}
L_v(w)\eqdef |v\nabla w+w\nabla v |^p-w^p|\nabla v|^p
-pw^{p-1}v|\nabla v|^{p-2}\nabla v\cdot\nabla w\geq 0 \quad
w\in\coreplus,
 \ee
and $\coreplus$ denotes the cone of all nonnegative functions in
$\core$.

The following proposition claims that the nonnegative Lagrangian
$L_v(w)$, which contains indefinite terms,  is bounded from above
and from below by multiples of a simpler Lagrangian.
\begin{proposition}[{\cite[Lemma~2.2]{aky}}]
\label{prop:superPicone}  Let $v$ be a positive solution of the
equation $Q'(u)=0$ in $\Omega$. Then
\bea \label{p<2}  L_v(w)\asymp  v^2 |\nabla w|^2\left(w|\nabla
v|+v|\nabla w|\right)^{p-2}\qquad \forall w\in\coreplus.\eea
In particular, for $p\ge 2$, we have
\bea \label{p>2}  L_v(w)\asymp  \hat L_v(w)\eqdef v^p|\nabla
w|^p+v^2|\nabla v|^{p-2} w^{p-2}|\nabla w|^2 \qquad \forall
w\in\coreplus.\eea
\end{proposition}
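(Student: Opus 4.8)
The plan is to establish the pointwise equivalence \eqref{p<2} first, then derive \eqref{p>2} as an algebraic consequence of it. For \eqref{p<2}, I would work pointwise in $\Omega$ at each $x$ where $w(x)>0$ (the inequality being trivial where $w=0$, since both sides vanish there together with $\nabla w$). Writing $a\eqdef v\nabla w$ and $b\eqdef w\nabla v$ as vectors in $\R^N$, the Lagrangian in \eqref{PiconeLagr} takes the form $L_v(w)=|a+b|^p-|b|^p-p|b|^{p-2}b\cdot a$, which is precisely the ``Bregman-type'' remainder of the convex function $\xi\mapsto|\xi|^p$ at the point $b$ with increment $a$. The key analytic fact I would invoke (or prove via a one-variable Taylor argument) is the elementary vector inequality
\[
|a+b|^p-|b|^p-p|b|^{p-2}b\cdot a\;\asymp\;|a|^2\bigl(|a|+|b|\bigr)^{p-2},
\]
valid for all $a,b\in\R^N$ with constants depending only on $p$ and $N$. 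Substituting back $|a|=v|\nabla w|$ and $|b|=w|\nabla v|$ (using $v>0$, $w\ge 0$) and pulling the factor $v^2$ out of $|a|^2$ gives exactly the right-hand side of \eqref{p<2}, since $(v|\nabla w|+w|\nabla v|)^{p-2}$ is what $(|a|+|b|)^{p-2}$ becomes.

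For the second part, assume $p\ge 2$ and apply the elementary scalar equivalence $(s+t)^{p-2}\asymp s^{p-2}+t^{p-2}$ for $s,t\ge 0$ (again with constants depending only on $p$), with $s=v|\nabla w|$ and $t=w|\nabla v|$. Then
\[
v^2|\nabla w|^2\bigl(v|\nabla w|+w|\nabla v|\bigr)^{p-2}
\;\asymp\;
v^2|\nabla w|^2\bigl(v^{p-2}|\nabla w|^{p-2}+w^{p-2}|\nabla v|^{p-2}\bigr)
\;=\;
v^p|\nabla w|^p+v^2|\nabla v|^{p-2}w^{p-2}|\nabla w|^2,
\]
which is $\hat L_v(w)$. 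Composing this with \eqref{p<2} yields $L_v(w)\asymp\hat L_v(w)$, and since all the equivalence constants are uniform in $x$ (depending only on $p$, and on $N$ through the vector inequality), the equivalence holds as an inequality between functions on $\Omega$ with fixed constants, as claimed.

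The main obstacle is the vector inequality displayed above; everything else is routine algebra. I would prove it by reducing to a scalar estimate: by homogeneity one may normalize $|b|=1$ (the case $b=0$ being immediate), and then one must show $|b+a|^p-1-p\,b\cdot a\asymp|a|^2(1+|a|)^{p-2}$ uniformly in the unit vector $b$ and the vector $a$. For $|a|$ small this follows from the second-order Taylor expansion of $\xi\mapsto|\xi|^p$ at $b$, whose Hessian is positive definite with eigenvalues comparable to $1$ (here $p>1$ is what guarantees the function is $C^2$ away from the origin and strictly convex in the relevant directions); for $|a|$ large one compares both sides with $|a|^p$ directly, using convexity of $|\xi|^p$ to bound the remainder below and the triangle inequality to bound it above. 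Patching the two regimes with a compactness argument on the remaining bounded range of $|a|$ gives the uniform constants. This is exactly the content of \cite[Lemma~2.2]{aky}, so in the paper one may simply cite it; I spell out the argument here only to indicate where the hypotheses $1<p<\infty$ (for the lower regime) and $p\ge 2$ (for \eqref{p>2}) enter.
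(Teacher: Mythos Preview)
Your proposal is correct. The paper itself does not prove this proposition; it is stated with a citation to \cite[Lemma~2.2]{aky} and no argument is given, as you observe at the end of your write-up. Your reduction---set $a=v\nabla w$, $b=w\nabla v$, recognize $L_v(w)$ as the second-order Taylor remainder of $\xi\mapsto|\xi|^p$, and invoke the pointwise vector inequality $|a+b|^p-|b|^p-p|b|^{p-2}b\cdot a\asymp|a|^2(|a|+|b|)^{p-2}$---is the standard route and is exactly what underlies the cited lemma. The derivation of \eqref{p>2} from \eqref{p<2} via $(s+t)^{p-2}\asymp s^{p-2}+t^{p-2}$ for $p\ge 2$ is likewise correct. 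One small remark: the constants in the vector inequality depend only on $p$, not on $N$, since the expression is determined by $|a|$, $|b|$, and $a\cdot b$ and hence lives in the plane spanned by $a$ and $b$.
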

\vskip 4mm
 Define the simplified energy $\hat{Q}$ by
\begin{equation}\label{Qhat}
\hat{Q}(u)\eqdef \int_\Omega \hat L_v(w)\dx \qquad w =u/v
\in\coreplus.
\end{equation}
It is shown in \cite{aky} that for $p>2$ neither of the terms in
the simplified energy $\hat{Q}$ is dominated by the other.

 It follows
from Proposition~\ref{prop:superPicone} that
$$Q(u)=Q(|u|)\asymp\hat{Q}(|u|) \qquad u
\in\core.$$

In \cite{TakTin}, the solvability of equation $Q'(u)=f$ is proved
in the class of functions $u$ satisfying $Q^{**}(u)<\infty$, where
$Q^{**}\le Q$ is the second convex conjugate (in the sense of
Legendre transformation) of $Q$. If the inequality $Q\le C Q^{**}$
is true, then ${Q^{**}}^{1/p}(u)$ would define a norm, and $Q$
would extend to a Banach space, which should be regarded as the
natural energy space for the functional $Q$.

On the other hand, if $p>2$, it is not clear whether the functional
$\hat{Q}$ is convex due to the second term in \eqref{p>2}.
It has, however, the following convexity property.
\begin{proposition}
\label{convexity} Assume that $p\geq 2$, and let $v\in
C^1_{\mathrm{loc}}(\Omega)$ be a fixed positive function. Consider
the functional
$$
\mathcal Q(\psi)\eqdef \hat{Q}(v\psi^{2/p})\qquad
\psi\in\coreplus,
$$
where $\hat Q$ is defined by \eqref{p>2} and
\eqref{Qhat}. Then the functional $\mathcal Q$
is convex on $\coreplus$.
\end{proposition}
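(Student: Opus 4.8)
The plan is to substitute $w=\psi^{2/p}$ directly into the explicit Lagrangian $\hat L_v$ of \eqref{p>2} and to exploit the cancellation of exponents that $p\geq 2$ produces. Writing $\alpha\eqdef 2/p\in(0,1]$, so that $\alpha p=2$, and using $\nabla w=\alpha\,\psi^{\alpha-1}\nabla\psi$, the first term of $\hat L_v$ becomes $v^p|\nabla w|^p=\alpha^p\, v^p\,\psi^{\,2-p}|\nabla\psi|^p$ (since $(\alpha-1)p=2-p$), while the second term becomes $v^2|\nabla v|^{p-2}w^{p-2}|\nabla w|^2=\alpha^2\, v^2|\nabla v|^{p-2}|\nabla\psi|^2$ (since the $\psi$-exponent $\alpha(p-2)+2(\alpha-1)=\alpha p-2$ vanishes). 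Thus, by \eqref{Qhat},
\[
\mathcal Q(\psi)=\alpha^p\!\int_\Omega v^p\,\psi^{\,2-p}|\nabla\psi|^p\dx+\alpha^2\!\int_\Omega v^2|\nabla v|^{p-2}|\nabla\psi|^2\dx ,
\]
and it suffices to prove that each of the two functionals on the right is convex on $\coreplus$ (with values in $[0,\infty]$); being a sum of convex functionals, $\mathcal Q$ is then convex.

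The second functional is convex for a trivial reason: for each $x$ the integrand $v(x)^2|\nabla v(x)|^{p-2}|\xi|^2$ is a nonnegative quadratic form in $\xi$, hence convex in $\xi$, and $\psi\mapsto\nabla\psi$ is linear. The substance of the proof lies in the first functional, for which I would show that the integrand, regarded as the function $(\psi,\xi)\mapsto\psi^{\,2-p}|\xi|^p$ on $(0,\infty)\times\R^N$, is jointly convex, and then simply multiply by the positive constant $v(x)^p$ and integrate. The cleanest route uses the perspective transform: factor $\psi^{\,2-p}|\xi|^p=h(\psi,|\xi|)^2$, where $h(\psi,r)\eqdef r^{\,p/2}\psi^{\,(2-p)/2}=\psi\,(r/\psi)^{p/2}$ is precisely the perspective of the convex function $\rho\mapsto\rho^{p/2}$ (convex because $p/2\geq1$), hence $h$ is jointly convex and nonnegative; since $t\mapsto t^2$ is convex and nondecreasing on $[0,\infty)$, the square $h^2$ is jointly convex and, moreover, nondecreasing in $r$; and composing a jointly convex function that is nondecreasing in its last argument with the convex map $\xi\mapsto|\xi|$ preserves joint convexity. (If one prefers to avoid perspective functions, the same fact follows from a direct $2\times2$ Hessian computation for $(\psi,r)\mapsto\psi^{\,2-p}r^p$: its diagonal entries are nonnegative, and its determinant equals $p(p-2)\psi^{\,2-2p}r^{\,2p-2}\geq 0$.)

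The one point requiring care — and which I expect to be the only genuine subtlety rather than a real obstacle — is the behaviour on the zero set of $\psi$, where $2-p<0$ makes $\psi^{\,2-p}|\nabla\psi|^p$ a priori undefined. Here one uses that, for $\psi\in\coreplus$, a zero of $\psi$ is an interior minimum, so $\nabla\psi$ vanishes there as well; thus one may extend $(\psi,\xi)\mapsto\psi^{\,2-p}|\xi|^p$ by lower semicontinuity (value $0$ at $(0,0)$ and $+\infty$ at $(0,\xi)$ for $\xi\neq 0$), obtaining a convex $[0,\infty]$-valued function on $[0,\infty)\times\R^N$ whose finiteness set contains $(\psi(x),\nabla\psi(x))$ for a.e.\ $x$. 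Convexity of $\mathcal Q$ then reduces to the standard fact that $\psi\mapsto\int_\Omega F(x,\psi,\nabla\psi)\dx$ is a convex functional whenever each $F(x,\cdot,\cdot)$ is convex (which remains valid for $[0,\infty]$-valued $F$), applied to the affine map $\psi\mapsto(\psi,\nabla\psi)$; together with the convexity of the second functional, this gives the assertion.
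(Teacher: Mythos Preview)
Your proof is correct and follows the same overall decomposition as the paper: split $\mathcal Q=\mathcal Q_1+\mathcal Q_2$ into the two terms of $\hat L_v$, observe that $\mathcal Q_2$ reduces to a quadratic form in $\nabla\psi$ (the paper writes this as $w^{p-2}|\nabla w|^2=(2/p)^2|\nabla\psi|^2$), and then argue the convexity of $\mathcal Q_1$.

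The only genuine difference is in how the convexity of the first integrand is established. The paper stays in the $w$-variable: setting $w_t=\bigl[(1-t)w_0^{p/2}+tw_1^{p/2}\bigr]^{2/p}$, it computes $\nabla w_t$ and applies H\"older's inequality (with exponents $p/2$ and its conjugate) to obtain the pointwise estimate $|\nabla w_t|^{p/2}\le(1-t)|\nabla w_0|^{p/2}+t|\nabla w_1|^{p/2}$, which is then squared. You instead pass to the $\psi$-variable, write the integrand as $\psi^{2-p}|\nabla\psi|^p$, and recognise this as $h(\psi,|\nabla\psi|)^2$ with $h(\psi,r)=\psi(r/\psi)^{p/2}$ the perspective of $\rho\mapsto\rho^{p/2}$. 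These are the same fact in two languages: the paper's H\"older step \emph{is} the standard proof of perspective convexity, and its inequality $|\nabla w_t|^{p/2}\le(1-t)|\nabla w_0|^{p/2}+t|\nabla w_1|^{p/2}$ is exactly the convexity of $h$ evaluated along $(\psi,\nabla\psi)$. Your route is a bit more conceptual and has the mild advantage that you treat the zero set of $\psi$ explicitly via lower-semicontinuous extension, a point the paper glosses over; the paper's route is more self-contained, requiring no appeal to perspective-function theory.
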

\begin{proof} We first split each of the functionals $\hat{Q}$ and $\mathcal{Q}$ into a sum of two
functionals:
$$
\hat{Q}_1(u)\!\eqdef\!\displaystyle{\!\int_\Omega\!\!\! v^p|\nabla
w|^p\!\dx}, \;\; \hat{Q}_2(u)
\!\eqdef\!\displaystyle{\!\int_\Omega\!\!\! v^2|\nabla
v|^{p-2}w^{p-2}|\nabla w|^2\!\dx}
 \;\;\;  w\! =\!u/v\!\in\! \coreplus,$$

\begin{align*}\label{lc}
&\mathcal Q_1(\psi)\eqdef
\hat{Q}_1(v\psi^{2/p})=\displaystyle{\int_\Omega\ v^p|\nabla
(\psi^{2/p})|^p\dx} & \qquad \psi\in \coreplus ,\\[4mm]
 & \mathcal Q_2(\psi)\eqdef \hat{Q}_2(v\psi^{2/p})=
 \displaystyle{\int_\Omega
v^2|\nabla v|^{p-2}\psi^{2(p-2)/p}|\nabla (\psi^{2/p})|^2\dx}
 &   \qquad \psi\in \coreplus.
\end{align*}
Thus,  $\hat{Q}=\hat{Q}_1+\hat{Q}_2$, and
$\mathcal{Q}=\mathcal{Q}_1+\mathcal{Q}_2$.

\vskip 4mm

For $t\in[0,1]$ and $w_0,w_1\in \coreplus$, let
$$
w_t\eqdef\left[(1-t)w_0^{p/2}+tw_1^{p/2}\right]^{2/p}.
$$
Then
$$
\nabla w_t=\dfrac{(1-t)w_0^{p/2-1}\nabla w_0+tw_1^{p/2-1}\nabla
w_1} {\left[(1-t)w_0^{p/2}+tw_1^{p/2}\right]^{1-2/p}}\,.
$$
Therefore,
 \be\label{eq9} |\nabla
w_t|\le\dfrac{[(1-t)^{2/p}w_0]^{p/2-1}(1-t)^{2/p} |\nabla
w_0|+(t^{2/p}w_1)^{p/2-1}t^{2/p}|\nabla w_1|}
{\left[(1-t)w_0^{p/2}+tw_1^{p/2}\right]^{1-2/p}}\,.
 \ee
Applying H\"older inequality to the sum in the numerator of
\eqref{eq9} (with the terms $(1-t)^{2/p}|\nabla w_0|$ and
$t^{2/p}|\nabla w_1|$ raised to the power $p/2$) and taking into
account that the conjugate of $p/2$ is reciprocal to $1-2/p$, we
have
\begin{equation}
\label{p/2}
|\nabla w_t|^{p/2}\le (1-t)|\nabla w_0|^{p/2}+t|\nabla w_1|^{p/2}.
\end{equation}
From \eqref{p/2} it follows easily that
$$
|\nabla w_t|^{p}\le (1-t)|\nabla w_0|^{p}+t|\nabla w_1|^{p}.
$$
Setting $\psi_t\eqdef w_t^{p/2}$, $t\in[0,1]$, we immediately
conclude that $\mathcal Q_1$ is convex as a function of $\psi$.
The same conclusion extends to $\mathcal Q_2$ once we note that
$$
w^{p-2}|\nabla w|^2=(2/p)^2|\nabla w^{p/2}|^2 ,
$$
and use \eqref{p/2} for $p=4$.
\end{proof}
\vskip4mm
Let
\begin{equation}
\label{N} N(\psi)\eqdef \left[
\mathcal{Q}(\psi)\right]^{1/2}=\left[\hat
Q(v\psi^{2/p})\right]^{1/2}\qquad \psi\in\coreplus.
\end{equation}
It is immediate that $N(\psi)>0$ for $\psi\in\coreplus$,  unless
$\psi=0$, and that $N(\lambda\psi)=\lambda N(\psi)$ for $\lambda
\geq 0$. Due to Proposition~\ref{convexity}, the functional
$N(\cdot)$ satisfies the triangle inequality
$$N(\psi_1+\psi_2)\leq N(\psi_1)+N(\psi_2)\qquad \psi_1,\psi_2\in\coreplus.$$
Thus, we have equipped the cone $\coreplus$ with a norm. For $p=2$
the functional $Q=\hat Q$ is a positive quadratic form, and thus
convex. Consequently, in the subcritical case, $Q^{1/2}$ extends
the functional $N$ to a norm on the whole $\core$, and then by
completion,  to the Hilbert space $\mathcal D^{1,2}_V(\Omega)$. It
would be interesting to introduce $\mathcal D^{1,p}_V(\Omega)$ for
$p>2$ once one finds an extension of $N$ to $\core$.

%

\begin{center}{\bf Acknowledgments} \end{center}
Part of this research was done while K.~T. was visiting the
Technion. K.~T. would like to thank the Technion for the kind
hospitality. Y.~P. acknowledges the support of the Israel Science
Foundation (grant No. 587/07) founded by the Israeli Academy of
Sciences and Humanities, and the B.~and G.~Greenberg Research Fund
(Ottawa).
%


\begin{thebibliography}{777}

\bibitem{AFT} Adimurthi, S.~Filippas, and A.~Tertikas, 
On the best constant of Hardy-Sobolev inequalities, {\em Nonlinear Anal.} 
{\bf 70} (2009), 2826--2833.

\bibitem{AH1} W.~Allegretto, and Y.~X.~Huang, A Picone's identity for the
$p$-Laplacian and applications, {\em Nonlinear Anal.} {\bf 32}
(1998), 819--830.
%
\bibitem{AH2} W.~Allegretto, and Y.~X.~Huang, Principal eigenvalues and Sturm
comparison via Picone's identity,  {\em J. Differential Equations}
{\bf 156} (1999), 427--438.
%
\bibitem{BFL} R.~Benguria, R.~Frank, and M.~Loss,
The sharp constant in the Hardy-Sobolev-Maz'ya inequality
in the three dimensional upper half-space, preprint, arXiv:0705.3833.
%
\bibitem{BM} H.~Brezis, and M.~Marcus, Hardy's inequalities revisited.
Dedicated to Ennio De Giorgi, {\em Ann. Scuola Norm. Sup. Pisa Cl.
Sci.} (4) 25 (1997), 217--237.
%
\bibitem{BMS} H.~Brezis, M.~Marcus, and I.~Shafrir,
Extremal functions for Hardy's inequality with weight, {\em J.
Funct. Anal.} {\bf 171} (2000), 177--191.
%
\bibitem{CKN} L.~Caffarelli, R.~Kohn, and L.~Nirenberg,
First order interpopation inequalities with weight, {\em
Compositio Math.} {\bf 53} (1984), 259--275.
%
\bibitem{dPEM} M.~del Pino, M.~Elgueta, and R.~Manasevich,  A homotopic
deformation along $p$ of a Leray-Schauder degree result and
existence for $(\abs{u'}^{p-2}u')'+f(t,u)=0,\;u(0)=u(T)=0,\;p>1$,
{\em J. Differential Equations} {\bf 80} (1989), 1--13.
%
\bibitem{DS} J.~I.~Diaz, and J.~E.~Sa\'{a},
Existence et unicit\'{e} de solutions positives pour certaines
\'{e}quations elliptiques quasilin\'{e}aires, {\em C.~R.~Acad.
Sci. Paris Ser. I Math.} {\bf 305} (1987), 521--524.
%
\bibitem{FMT} S.~Filippas, V.~Maz'ya, and A.~Tertikas,
Critical Hardy-Sobolev inequalities.  {\em J. Math. Pures Appl.
(9)} {\bf 87}  (2007), 37--56.
%
\bibitem{FT} S.~Filippas, and A.~Tertikas, Optimizing improved Hardy inequalities,
{\em J. Func. Anal.} {\bf 192} (2002), 186--233; Corrigendum, {\em J.~Funct. Anal.} {\bf 255} (2008), 2095.
%
\bibitem{FTT} S.~Filippas, A.~Tertikas, and J.~Tidblom,
On the structure of Hardy-Sobolev-Maz'ya inequalities, preprint,
arXiv:0802.0986.
%
\bibitem{FHTdT} J.~Fleckinger-Pell\'{e}, J.~Hern\'andez, P.~Tak\'a\v c,
and F.~de Th\'elin, Uniqueness and positivity for solutions of
equations with the $p$-Laplacian, Proceedings of the Conference on
Reaction-Diffusion Equations (Trieste, 1995), pp.~141--155,
Lecture Notes in Pure and Applied Math. Vol. 194,  Marcel Dekker,
New York, 1998.
%
\bibitem{GS} J.~Garc\'{\i}a-Meli\'{a}n,
and J.~Sabina de Lis, Maximum and comparison principles for
operators involving the $p$-Laplacian, {\em J. Math. Anal. Appl.}
{\bf 218} (1998), 49--65.
%
\bibitem{Ilyin} V.~P.~Il'in, Some integral inequalities and their applications
in the theory of differentiable functions of several variables,
{\em Mat. Sbornik} {\bf 54} (1961), 331--380.
%
\bibitem{Sandeep} G.~Mancini, and K.~Sandeep,
On a semilinear elliptic equation in $\mathbb H^n$, preprint,
2007.
%
\bibitem{MS} M.~Marcus, and I.~Shafrir, An eigenvalue problem related to
Hardy's $L^p$ inequality, {\em Ann. Scuola Norm. Sup. Pisa Cl.
Sci. (4)} {\bf 29} (2000), 581--604.

\bibitem{Mazya} V.~Maz'ya, Sobolev Spaces, Springer-Verlag, Berlin, 1985.
%
\bibitem{aky} Y.~Pinchover, A.~Tertikas, and K.~Tintarev,
A Liouville-type theorem for the $p$-Laplacian with potential
term, {\em Ann.  Inst.  H. Poincar\'{e}.  Anal. Non Lin\'{e}aire}
{\bf 25} (2008), 357--368.
%
\bibitem{ky2} Y.~Pinchover, and K.~Tintarev,
Ground state alternative for singular Schr\"{o}dinger operators,
{\em J. Functional Analysis} {\bf 230} (2006), 65--77.
%
\bibitem{ky3} Y.~Pinchover, and K.~Tintarev, Ground state
alternative for $p$-Laplacian with potential term, {\em Calc. Var.
Partial Differential Equations} {\bf 28} (2007), 179--201.
%
\bibitem{ky5} Y.~Pinchover, and K.~Tintarev, On positive solutions of minimal
growth for singular $p$-Laplacian with potential term, {\em Adv.
Nonlin. Studies} {\bf 8} (2008), 213--234.
%
\bibitem{Smets} D.~Smets, Nonlinear Schr\"odinger equations with
Hardy potential and critical nonlinearities,  {\em Trans. Amer.
Math. Soc.} {\bf 357}  (2005), 2909--2938.
%
\bibitem{TakTin} P.~Tak\'{a}\v{c}, and K.~Tintarev, Generalized minimizer solutions
for equations with the $p$-Laplacian and a potential term, {\em
Proc. Royal. Soc. Edinburgh} {\bf 138 A} (2008), 201--221.
%
\bibitem{TerTin} A.~Tertikas, and K.~Tintarev,
On existence of minimizers for the Hardy-Sobolev-Maz'ya
inequality, {\em Ann. Mat. Pura Appl. (4)} {\bf 183} (2004),
165--172
%
\bibitem{Tr1} M.~Troyanov, Parabolicity of manifolds, {\em Siberian Adv. Math.} {\bf
9} (1999),  125--150.
%
\bibitem{Tr2} M.~Troyanov, Solving the $p$-Laplacian on manifolds, {\em Proc. Amer.
Math. Soc.} {\bf 128} (2000), 541--545.
\end{thebibliography}
\end{document}